\begin{document}

\newtheorem{lem}{Lemma}
\newtheorem{thm}{Theorem}
\newtheorem{cor}{Corollary}
\newtheorem{exa}{Example}
\newtheorem{con}{Conjecture}
\newtheorem{rem}{Remark}
\newtheorem{obs}{Observation}
\newtheorem{definition}{Definition}
\newtheorem{pro}{Proposition}
\theoremstyle{plain}
\newcommand{\D}{\displaystyle}
\newcommand{\DF}[2]{\D\frac{#1}{#2}}

\renewcommand{\figurename}{{\bf Fig}}
\captionsetup{labelfont=bf}

\title{\bf \Large Nordhaus-Gaddum-type theorem for
total proper connection number of
graphs\footnote{Supported by NSFC No.11371205 and 11531011, and PCSIRT.}}

\author{{\small Wenjing Li, Xueliang Li, Jingshu Zhang}\\
      {\small Center for Combinatorics and LPMC}\\
      {\small Nankai University, Tianjin 300071, China}\\
       {\small liwenjing610@mail.nankai.edu.cn; lxl@nankai.edu.cn;
       jszhang@mail.nankai.edu.cn}
       }
\date{}

\maketitle
\begin{abstract}
A graph is said to be \emph{total-colored} if all the edges and the vertices of the graph are colored. A path $P$ in a total-colored graph $G$ is called a \emph{total-proper path} if $(i)$ any two adjacent edges of $P$ are assigned distinct colors; $(ii)$ any two adjacent internal vertices of $P$ are assigned distinct colors; $(iii)$ any internal vertex of $P$ is assigned a distinct color from its incident edges of $P$. The total-colored graph $G$ is \emph{total-proper connected} if any two distinct vertices of $G$ are connected by a total-proper path. The \emph{total-proper connection number} of a connected graph $G$, denoted by $tpc(G)$, is the minimum number of colors that are required to make $G$ total-proper connected. In this paper, we first characterize the graphs $G$ on $n$ vertices with $tpc(G)=n-1$. Based on this, we obtain a Nordhaus-Gaddum-type result for total-proper connection number. We prove that if $G$ and $\overline{G}$ are connected complementary graphs on $n$ vertices, then $6\leq tpc(G)+tpc(\overline{G})\leq n+2$. Examples are given to show that the lower bound is sharp for $n\geq 4$. The upper bound is reached for $n\geq 5$ if and only if $G$ or $\overline{G}$ is the tree with maximum degree $n-2$.
\\[2mm]

\noindent{\bf Keywords:} total-proper path, total-proper connection number, complementary graph, Nordhaus-Gaddum-type.\\[2mm]

\noindent{\bf AMS Subject Classification 2010:} 05C15, 05C35, 05C38, 05C40.
\end{abstract}

\section{Introduction}

All graphs considered in this paper are simple, finite, and undirected. We follow the terminology and notation of Bondy and Murty in~\cite{Bondy} for those not
defined here.
If $G$ is a graph and $A\subseteq V(G)$,
then $G[A]$ denotes the subgraph of $G$
induced by the vertex set $A$, and $G-A$
the graph $G[V(G)\backslash\ A]$. If $A=\{v\}$, then we write $G-v$ for short.
An edge $xy$ is called a \emph{pendant edge} if one of its end vertices, say $x$, has
degree one, and $x$ is called a \emph{pendant vertex}.
For a vertex $v\in V(G)$, we use $N_G(v)$ to denote the neighborhood of $v$ in $G$ and use $d_G(v)$ to denote the degree of $v$ in $G$, sometimes we simply write $N(v)$ and $d(v)$ if $G$ is clear. For graphs $X$ and $G$, we write $X\cong G$ if $X$ is isomorphic to $G$. Throughout this paper, $\mathbb{N}$ denotes the set of all
positive integers.

Let $G$ be a nontrivial connected graph with an \emph{edge-coloring c}
$:E(G)\rightarrow \{1,2,\dots,t\}$, $t\in \mathbb{N}$, where adjacent edges may
be colored with the same color. If adjacent edges of $G$ receive different colors by $c$, then $c$ is a \emph{proper coloring}. 
The minimum number of colors required in a proper coloring of $G$ is referred as the \emph{chromatic index} of $G$ and denoted by $\chi'(G)$. Meanwhile, a path in $G$ is called \emph{a rainbow path} if no two edges
of the path are colored with the same color. The graph $G$ is called \emph{rainbow connected}
if for any two distinct vertices of $G$, there is a rainbow path connecting them.
For a connected graph $G$, the \emph{rainbow connection number} of $G$, denoted by $rc(G)$, is defined as the
minimum number of colors that are required to make $G$ rainbow connected.
These concepts
were first introduced by Chartrand et al. in~\cite{Char1} and have been well-studied since then.
For further details, we refer the reader to a book~\cite{Li}.

Motivated by rainbow connection coloring and proper coloring in graphs, Borozan et al.~\cite{Magnant} introduced the concept of proper-path coloring. Let $G$ be a nontrivial connected graph with an edge-coloring. A path in $G$ is called a \emph{proper path}
if no two adjacent edges of the path are colored with the same color. 
The $k$-\emph{proper connection number} of a connected graph $G$, denoted by $pc_k(G)$, 
is defined as the minimum number of colors that are required in an edge-coloring of $G$ such that any two distinct vertices of $G$ are connected by $k$ internally pairwise vertex-disjoint proper paths.
We write $pc(G)$ for short when $k=1$. For more details, we refer to a dynamic survey~\cite{Li1}.

Jiang et al.~\cite{Jiang} introduced the analogous concept of total-proper connection of graphs. Let $G$ be a nontrivial connected graph with a \emph{total-coloring c} $:E(G)\cup V(G)
\rightarrow \{1,2,\dots,t\}$, $t\in \mathbb{N}$. We use $c(u),c(uv)$ to denote the colors assigned to the vertex $u\in V(G)$ and the edge $uv\in E(G)$, respectively. A path $P$ is called a \emph{total-proper path} if $(i)$ any two adjacent edges of $P$ are assigned distinct colors; $(ii)$ any two adjacent internal vertices of $P$ are assigned distinct colors; $(iii)$ any internal vertex of $P$ is assigned a distinct color from its incident edges of $P$. A total-coloring $c$ is a \emph{total-proper coloring} of $G$ if every pair of distinct vertices $u,v$ of $G$ is connected by a total-proper path in $G$. A graph with a total-proper coloring is said to be \emph{total-proper connected}. If $k$ colors are used, then $c$ is referred as a \emph{total-proper $k$-coloring}. The \emph{total-proper connection number} of a connected graph $G$, denoted by $tpc(G)$, is the minimum number of colors that are required to make $G$ total-proper connected. For the total-proper connection number of graphs, the following observations are immediate.

\begin{pro}\label{pro1}
Let $G$ be a connected graph on $n$ vertices. Then

$(i)\ tpc(G)=1$ if and only if $G=K_n;$

$(ii)\ tpc(G)\geq 3$ if $G$ is noncomplete.
\end{pro}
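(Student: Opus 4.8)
The plan is to treat the two parts separately, relying throughout on the single structural fact that a nonadjacent pair of vertices can be joined only by a path of length at least two, which therefore contains an internal vertex whose color and incident-edge colors are constrained by conditions (i) and (iii).

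For part (i), I would first dispose of the easy direction: if $G=K_n$, then any two distinct vertices are adjacent, so the single edge joining them is a path of length one; such a path has no two adjacent edges and no internal vertices, so conditions (i)--(iii) are vacuously met and one color suffices, giving $tpc(K_n)=1$. For the converse I would argue by contraposition. Assuming $G\neq K_n$, there exist two nonadjacent vertices $u,v$, and every $u$--$v$ path has length at least two and hence contains two adjacent edges. With a single color available these two edges receive the same color, violating condition (i), so no total-proper $u$--$v$ path can exist. Thus one color can never yield a total-proper coloring, i.e.\ $tpc(G)\geq 2$, which is exactly the contrapositive we want.

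For part (ii), I would fix nonadjacent vertices $u,v$ in a noncomplete $G$ and show that two colors never suffice. Take any $u$--$v$ path $P$; it has length at least two, so it possesses an internal vertex $w$ with two incident edges $e,f$ on $P$. Since $e$ and $f$ are adjacent, condition (i) forces $c(e)\neq c(f)$, and with only two colors available this means $\{c(e),c(f)\}=\{1,2\}$ already exhausts the palette. But condition (iii) requires $c(w)$ to differ from both $c(e)$ and $c(f)$, which is impossible when no third color exists. Hence $P$ is not total-proper, and as $P$ was arbitrary, $u$ and $v$ admit no total-proper path under any two-coloring. Together with part (i), which already rules out one color for noncomplete $G$, this forces $tpc(G)\geq 3$.

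I do not expect a genuine obstacle here: the whole argument rests on the observation that at an internal vertex the two path-edges are mutually adjacent, so they consume two distinct colors, while the vertex itself must avoid both and so demands a third. The only point deserving care is to phrase the converse of part (i) and the bound in part (ii) as claims about \emph{every} path between the fixed nonadjacent pair, rather than about one particular path, so that the nonexistence of a total-proper path is correctly established.
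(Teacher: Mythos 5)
Your proof is correct: the paper itself offers no proof of this proposition, stating it as an ``immediate'' observation, and your argument is exactly the intended reasoning behind it. The key points --- edges of $K_n$ are total-proper paths with no internal vertices, while a nonadjacent pair forces a path of length at least two whose internal vertex and its two incident path-edges require three pairwise distinct colors --- are precisely what makes the observation immediate, and you have quantified correctly over all paths between the nonadjacent pair.
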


A Nordhaus-Gaddum-type result is a (tight) lower or upper bound on the sum or product of the values of a parameter for a graph and its complement. The name ``Nordhaus-Gaddum-type'' is given because Nordhaus and Gaddum~\cite{Nordhaus} first established the following type of inequalities for chromatic number of graphs in 1956. They proved that if $G$ and $\overline{G}$ are complementary graphs on $n$ vertices whose chromatic number are $\chi(G)$ and $\chi(\overline{G})$, respectively, then $2\sqrt{n}\leq \chi(G)+\chi(\overline{G})\leq n+1$. Since then, many analogous inequalities of other graph parameters have been considered, such as diameter~\cite{Harary}, domination number~\cite{Harary2}, proper connection number~\cite{Huang}, and so on. In this paper, we consider analogous inequalities concerning total-proper connection number of graphs. We prove that if both $G$ and $\overline{G}$ are connected, then
\begin{equation*}
6\leq tpc(G)+tpc(\overline{G})\leq n+2.
\end{equation*}

The rest of this paper is organized as follows: In Section 2, we list some useful known results on total-proper connection number. In Section 3, we first characterize the graphs $G$ on $n$ vertices with $tpc(G)=n-1$. Based on this result, we give the upper bound and show that this bound is reached for $n\geq 5$ if and only if $G$ or $\overline{G}$ is the tree with maximum degree $n-2$. In the final section, we give the lower bound and show that it is sharp for $n\geq 4$.

\section{Preliminaries}

In this section, we list some preliminary results and definitions on the total-proper coloring which can be found in~\cite{Jiang}.

\begin{pro}\label{pro2}~\cite{Jiang}
If $G$ is a nontrivial connected graph and $H$ is a connected spanning subgraph of $G$, then $tpc(G)\leq tpc(H)$. In particular, $tpc(G)\leq tpc(T)$ for every spanning tree $T$ of $G$.
\end{pro}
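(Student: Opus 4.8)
The plan is to start from an optimal total-proper coloring of $H$ and extend it to a total-proper coloring of $G$ that uses no additional colors. Since $H$ is a spanning subgraph, it has exactly the same vertex set as $G$ and its edge set is a subset of $E(G)$; the only difference between the two graphs is the collection of edges in $E(G)\setminus E(H)$. So morally, everything that makes $H$ total-proper connected should already be present inside $G$.

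First I would fix a total-proper coloring $c_H$ of $H$ with $k=tpc(H)$ colors. I then define a total-coloring $c$ of $G$ by keeping every vertex color and every edge color of $c_H$ unchanged, and by assigning to each new edge in $E(G)\setminus E(H)$ an arbitrary color already in use (say color $1$). This uses exactly $k$ colors on $G$. The key point is that the total-proper paths of $H$ survive verbatim in $G$: given any two distinct vertices $u,v\in V(G)=V(H)$, there is a total-proper $u$--$v$ path $P$ in $H$ under $c_H$, and since $P$ uses only edges of $H$ while $c$ agrees with $c_H$ on all of $V(H)$ and $E(H)$, the path $P$ carries exactly the same vertex and edge colors in $G$. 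Hence conditions $(i)$, $(ii)$ and $(iii)$ for a total-proper path are inherited directly, and $P$ remains a total-proper $u$--$v$ path in $G$ under $c$. Therefore $G$ is total-proper connected with $k$ colors, giving $tpc(G)\le k=tpc(H)$.

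I do not expect a serious obstacle here; the only thing to verify carefully is that the three defining conditions of a total-proper path depend solely on the colors of the edges and internal vertices of the path itself. Because no new edge of $E(G)\setminus E(H)$ lies on $P$, augmenting the graph with extra (arbitrarily colored) edges cannot disturb the properness along an already-valid path. The ``in particular'' statement is then immediate: a spanning tree $T$ of $G$ is a connected spanning subgraph of $G$, so the general inequality specializes to $tpc(G)\le tpc(T)$.
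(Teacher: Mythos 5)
Your proof is correct and complete: the three defining conditions of a total-proper path depend only on the colors of the path's own edges and internal vertices, so any witness path in $H$ under $c_H$ remains valid in $G$ once the coloring is extended arbitrarily to $E(G)\setminus E(H)$. The paper itself states this proposition without proof (citing~\cite{Jiang}), and your argument is exactly the standard one that establishes it.
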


\begin{pro}\label{pro3}~\cite{Jiang}
Let $G$ be a connected graph of order $n\geq 3$ that contains a bridge. If $b$ is the maximum number of bridges incident with a single vertex in $G$, then $tpc(G)\geq b+1$.
\end{pro}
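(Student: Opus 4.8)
I would fix a vertex $v$ incident with $b$ bridges $e_1=vu_1,\dots,e_b=vu_b$, and let $c$ be an arbitrary total-proper coloring of $G$. The plan is to show that these $b$ bridges must receive pairwise distinct colors and that the color $c(v)$ must differ from every $c(e_i)$; together these force at least $b+1$ colors, which is exactly the claimed bound. The entire argument reduces to understanding which total-proper paths are available between the ``far'' endpoints of two distinct bridges.

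The key structural step is the following claim: for $i\neq j$, every $u_i$--$u_j$ path passes through $v$ and uses $e_i,e_j$ as its two edges at $v$. To see this, let $B_i$ be the component of $G-e_i$ containing $u_i$. Since $e_k=vu_k$ survives in $G-e_i$ for every $k\neq i$, each such $u_k$ stays attached to $v$, so $u_j$ lies in the component of $G-e_i$ containing $v$ rather than in $B_i$. Hence every $u_i$--$u_j$ path must cross the bridge $e_i$, and by symmetry it must also cross $e_j$. As $e_i$ and $e_j$ are both incident with $v$ while a path visits $v$ at most once, using both bridges forces the path to pass through $v$ with $e_i$ and $e_j$ as the two edges at $v$; in particular $v$ is internal. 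I expect this claim to be the main obstacle, as it is where the bridge hypothesis and the ``single vertex'' hypothesis are genuinely used.

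Granting the structural step, I would take any total-proper $u_i$--$u_j$ path $P$. Since $v$ is internal on $P$ with incident path-edges $e_i,e_j$, condition $(i)$ gives $c(e_i)\neq c(e_j)$ and condition $(iii)$ gives $c(v)\notin\{c(e_i),c(e_j)\}$. Letting $(i,j)$ range over all pairs then shows at once that $c(e_1),\dots,c(e_b)$ are pairwise distinct and that $c(v)$ avoids all of them, so $c$ uses at least $b+1$ colors; this handles $b\geq 2$. For $b=1$ the bound $tpc(G)\geq 2$ is immediate from Proposition~\ref{pro1}, since a connected graph on $n\geq 3$ vertices with a bridge is noncomplete and hence has $tpc(G)\geq 3$. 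As $c$ is arbitrary, this gives $tpc(G)\geq b+1$ in all cases.
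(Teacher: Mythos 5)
Your proof is correct and complete: the structural claim (that any $u_i$--$u_j$ path must cross both bridges $e_i$ and $e_j$, and hence pass through $v$ with exactly these two edges at $v$) is valid, and conditions $(i)$ and $(iii)$ of a total-proper path then force the $b$ bridge colors to be pairwise distinct and $c(v)$ to differ from all of them, with the $b=1$ case correctly handled via noncompleteness. The paper itself gives no proof of this proposition --- it is quoted from \cite{Jiang} --- but your argument is precisely the standard one for this bound, so there is nothing to flag.
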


In~\cite{Jiang}, the authors determined the total-proper connection numbers of trees and complete bipartite graphs.

\begin{thm}\label{thm1}~\cite{Jiang}
If $T$ is a tree of order $n\geq 3$, then $tpc(T)=\Delta(T)+1$.
\end{thm}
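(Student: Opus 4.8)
The plan is to prove the two inequalities $tpc(T)\ge \Delta(T)+1$ and $tpc(T)\le\Delta(T)+1$ separately, and the lower bound is the easy half.

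For the lower bound I would simply invoke Proposition~\ref{pro3}. Since $T$ is a tree, every edge of $T$ is a bridge, so the number of bridges incident with a vertex equals its degree; in particular the quantity $b$ of Proposition~\ref{pro3} equals $\Delta(T)$, and hence $tpc(T)\ge b+1=\Delta(T)+1$. If a self-contained argument is preferred, fix a vertex $v$ with $d(v)=\Delta(T)$ and neighbors $u_1,\dots,u_\Delta$. In a tree the only $u_i$--$u_j$ path is $u_ivu_j$, so total-properness forces $c(u_iv)\ne c(u_jv)$ whenever $i\ne j$; thus the $\Delta$ edges at $v$ receive pairwise distinct colors. On that same path $v$ is an internal vertex, so condition $(iii)$ gives $c(v)\ne c(u_iv)$ for every $i$ (here $\Delta\ge 2$ since $n\ge 3$, so for each $i$ some $j\ne i$ exists). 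Therefore the set $\{c(v),c(u_1v),\dots,c(u_\Delta v)\}$ consists of $\Delta+1$ distinct colors.

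For the upper bound the key observation I would isolate first is that in a tree the unique $u$--$v$ path is the \emph{only} $u$--$v$ path, and that being total-proper is a local, subpath-closed property. Consequently it suffices to produce one total-coloring of $T$ under which every path of $T$ is total-proper. A clean sufficient condition is that $c$ be a proper total-coloring in the classical sense: adjacent edges get distinct colors, adjacent vertices get distinct colors, and each vertex gets a color distinct from all its incident edges. These three requirements immediately yield conditions $(i)$--$(iii)$ for \emph{every} path of $T$ at once, hence total-proper connectivity.

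It then remains to realize such a coloring with only $\Delta(T)+1$ colors, i.e.\ to show that trees have total chromatic number $\Delta(T)+1$. I would do this constructively: root $T$ at an arbitrary vertex, color the root, and process the remaining vertices in BFS order; when handling an already-colored vertex $p$, first assign distinct colors to its down-edges avoiding $c(p)$ and the color of $p$'s up-edge, and then color each child $w$ avoiding only $c(p)$ and $c(pw)$. The one point that needs checking---and the only real obstacle---is that at each step a color remains free in a palette of size $\Delta(T)+1$: the down-edges at $p$ number $d(p)-1\le \Delta(T)-1$ and must dodge the two colors $c(p)$ and the up-edge color, while each child $w$ must dodge only the two colors $c(p)$ and $c(pw)$; a short degree count shows the forbidden sets never exhaust $\{1,\dots,\Delta(T)+1\}$, so the greedy choice always succeeds with no backtracking. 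Carrying out this bookkeeping gives $tpc(T)\le\Delta(T)+1$, and combined with the lower bound we conclude $tpc(T)=\Delta(T)+1$.
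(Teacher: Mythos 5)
Your proof is correct. One point of context first: this paper does not actually prove Theorem~\ref{thm1} --- it is imported from~\cite{Jiang} as a known result --- so there is no in-paper argument to compare against; your proposal stands as a self-contained substitute. Your lower bound is exactly the intended one (Proposition~\ref{pro3} with $b=\Delta(T)$, since every edge of a tree is a bridge), and your self-contained version of it is also sound, using that $u_ivu_j$ is the \emph{unique} $u_i$--$u_j$ path. For the upper bound, your route is clean and slightly stronger than necessary: you observe that $tpc(G)\leq\chi''(G)$ for any connected graph $G$ (a proper total coloring makes \emph{every} path total-proper, and in a tree the unique connecting paths are then automatically usable), and then you establish $\chi''(T)=\Delta(T)+1$ for trees with $\Delta\geq 2$ by a rooted greedy coloring; the color-counting there checks out ($d(p)-1\leq\Delta-1$ down-edges dodging two colors, each child dodging two colors out of $\Delta+1\geq 3$). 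It is worth noting that full properness of the total coloring is sufficient but not necessary for total-proper connection --- leaf vertices are never internal vertices of any path, so their colors are irrelevant --- which is why $tpc$ can be much smaller than $\chi''$ for general graphs; for trees, however, the two bounds meet at $\Delta(T)+1$, so nothing is lost by your reduction.
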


A \emph{Hamiltonian path} in a graph $G$ is a path containing every vertex of $G$ and a graph having a Hamiltonian path is a \emph{traceable graph}.

\begin{cor}\label{cor1}~\cite{Jiang}
If $G$ is a traceable graph that is not complete, then $tpc(G)=3$.
\end{cor}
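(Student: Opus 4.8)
The plan is to sandwich $tpc(G)$ between $3$ and $3$ by combining the monotonicity of $tpc$ under passage to a connected spanning subgraph (Proposition \ref{pro2}) with the exact value of $tpc$ on paths coming from Theorem \ref{thm1}.

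First I would establish the upper bound. Since $G$ is traceable, it contains a Hamiltonian path $P$, which is a spanning subgraph of $G$; indeed $P$ is a spanning tree isomorphic to the path $P_n$ on all $n$ vertices. Because $G$ is not complete it can be neither $K_1$ nor $K_2$, so $n\geq 3$ and $P_n$ is a genuine path with maximum degree $\Delta(P_n)=2$. Theorem \ref{thm1} then applies and gives $tpc(P)=\Delta(P_n)+1=3$. Taking $H=P$ in Proposition \ref{pro2} yields $tpc(G)\leq tpc(P)=3$.

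For the lower bound I would simply invoke Proposition \ref{pro1}$(ii)$: since $G$ is noncomplete, $tpc(G)\geq 3$. Combining the two inequalities gives $tpc(G)=3$, as claimed. There is essentially no obstacle in this argument, as the statement is a direct corollary of Theorem \ref{thm1} together with Propositions \ref{pro1} and \ref{pro2}. The only point deserving a moment's attention is checking that the order hypothesis $n\geq 3$ of Theorem \ref{thm1} is satisfied, and this is automatic because every noncomplete graph has at least three vertices.
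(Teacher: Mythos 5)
Your proof is correct and is exactly the intended derivation: the paper presents this as an immediate corollary of Theorem~\ref{thm1} (a Hamiltonian path is a spanning tree with maximum degree $2$, so $tpc(G)\leq tpc(P_n)=3$ by Proposition~\ref{pro2}), combined with the lower bound $tpc(G)\geq 3$ from Proposition~\ref{pro1}$(ii)$ for noncomplete graphs. Your check that $n\geq 3$ is also sound, since a traceable graph is connected and the only connected graphs on fewer than three vertices are $K_1$ and $K_2$, both complete.
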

\begin{thm}\label{thm2}~\cite{Jiang}
Let $G=K_{s,t}$ denote a complete bipartite graph with $s\geq t\geq 2$. Then $tpc(G)=3$.
\end{thm}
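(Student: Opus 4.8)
The plan is to establish $tpc(K_{s,t})\ge 3$ and $tpc(K_{s,t})\le 3$ separately. For the lower bound, note that since $s,t\ge 2$ any two vertices in the same part are nonadjacent, so $K_{s,t}$ is noncomplete and Proposition~\ref{pro1}$(ii)$ gives $tpc(K_{s,t})\ge 3$ at once. For the upper bound one could dispatch the traceable case $|s-t|\le 1$ directly through Corollary~\ref{cor1}; the construction I describe below in fact works for all $s\ge t\ge 2$, so I would present it uniformly, noting that the genuinely new difficulty appears when $s\ge t+2$. In that range $K_{s,t}$ is not traceable (a Hamiltonian path of a bipartite graph forces the parts to differ in size by at most one), so no spanning tree has maximum degree $2$ and neither Theorem~\ref{thm1} nor Corollary~\ref{cor1} is available; an explicit total-proper $3$-coloring must be built by hand.

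Write $X=\{x_1,\dots,x_s\}$ and $Y=\{y_1,\dots,y_t\}$. Each $x_i$ and $y_k$ are joined by a single edge, which is a total-proper path of length $1$, so only same-part pairs need work, and in a bipartite graph these are joined only by even paths. The crux is that length-$2$ paths alone do not suffice once $s$ is large: a total-proper path $x_i-y_k-x_j$ forces $c(x_iy_k),c(x_jy_k),c(y_k)$ to be pairwise distinct, hence to exhaust $\{1,2,3\}$, so each $y_k$ can connect only those pairs it separates into two fixed edge-classes, and a separating-family obstruction blocks connecting all $\binom{s}{2}$ pairs with three colors once $s$ is large relative to $t$. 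The key idea is to route through a single distinguished intermediate vertex on a length-$4$ path. To connect all pairs of $X$ I set $c(y_1)=1$, $c(y_2)=2$, $c(x_1)=3$, $c(x_1y_1)=2$, $c(x_1y_2)=1$, and $c(x_iy_1)=c(x_iy_2)=3$ for every $i\ge 2$; then $x_1,x_i$ are joined by $x_1-y_1-x_i$, while any $x_i,x_j$ with $i,j\ge 2$ are joined by $x_i-y_1-x_1-y_2-x_j$, which one checks meets all three total-proper conditions. Crucially $x_1$ is the only vertex of $X$ ever used internally, so the colors of $x_2,\dots,x_s$ stay free.

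To connect all pairs of $Y$ I run the mirror construction with hubs $x_2,x_3$ and distinguished vertex $y_3$ (this needs $t\ge 3$; when $t=2$ the only same-part pair in $Y$ is $\{y_1,y_2\}$, already joined by $y_1-x_1-y_2$). Thus I set $c(x_2)=1$, $c(x_3)=2$, $c(y_3)=3$, $c(x_2y_3)=2$, $c(x_3y_3)=1$, and $c(x_2y_k)=c(x_3y_k)=3$ for $k\ne 3$, joining $y_3,y_k$ by $y_3-x_2-y_k$ and any other $y_a,y_b$ by $y_a-x_2-y_3-x_3-y_b$; all remaining edges may be colored arbitrarily, since a valid total-proper path is unaffected by colors outside it. The main obstacle, and the step I would verify most carefully, is the mutual consistency of these two sub-colorings: the overlapping edges $x_2y_1,x_2y_2,x_3y_1,x_3y_2$ receive color $3$ under both rules, the internally used vertices on the two sides ($y_1,x_1,y_2$ versus $x_2,y_3,x_3$) are disjoint, and no endpoint color is ever constrained, so all same-part pairs are connected at once. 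This yields $tpc(K_{s,t})\le 3$ and, with the lower bound, the equality $tpc(K_{s,t})=3$.
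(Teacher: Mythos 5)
Your proof is correct. The lower bound via Proposition~\ref{pro1}$(ii)$ is exactly right, and your coloring checks out: the two ``hub'' sub-colorings are mutually consistent (the only shared edges $x_2y_1,x_2y_2,x_3y_1,x_3y_2$ receive color $3$ under both rules, and the internally used vertices $y_1,x_1,y_2$ and $x_2,y_3,x_3$ are disjoint), the length-$2$ and length-$4$ paths you exhibit satisfy all three conditions, and the degenerate case $t=2$ (which also covers $s=2$, since $t\geq 3$ forces $s\geq 3$) is handled. One important point of comparison: this paper does not prove Theorem~\ref{thm2} at all --- it is imported from the cited work of Jiang et al. --- so there is no in-paper proof to match against; your argument makes the result self-contained. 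The closest analogue inside the paper is the proof of Lemma~\ref{lem1}, which treats the harder graph $K_{s,t}\cup\{v\}$: there, for $t\geq 3$, the authors color a $6$-cycle $u_1w_1u_2w_2u_3w_3$ with the three colors in a rotating pattern and route every same-side pair along that cycle, with ad hoc paths for $t=2$. Your construction replaces the colored cycle by a spine $y_1x_1y_2$ (respectively $x_2y_3x_3$) whose remaining attached edges all get the third color; both are explicit-coloring arguments of comparable length, but yours connects all pairs of $X$ while using only one vertex of $X$ internally, which is precisely what makes the $X$-side and $Y$-side colorings trivially compatible, whereas the cycle-based coloring achieves compatibility by letting one cycle serve both sides simultaneously. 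Your motivational claim that length-$2$ paths cannot suffice for large $s$ is true (by a signature/pigeonhole argument over the $t$ possible middle vertices) but is not needed for, and does not affect, the correctness of the proof.
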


Given a total-colored path $P=v_1v_2\dots v_{s-1}v_s$ between any two vertices $v_1$ and $v_s$, we denote by $start_e(P)$ the color of the first edge in the path, i.e. $c(v_1v_2)$, and by $end_e(P)$ the last color, i.e. $c(v_{s-1}v_s)$. Moreover, let $start_v(P)$ the color of the first internal vertex in the path, i.e. $c(v_2)$, and by $end_v(P)$ the last color, i.e. $c(v_{s-1})$. If $P$ is just the edge $v_1v_s$, then $start_e(P)=end_e(P)=c(v_1v_s)$, $start_v(P)=c(v_s)$ and $end_v(P)=c(v_1)$.

\begin{definition}\label{def1}
Let $c$ be a total-coloring of a graph $G$ that makes $G$ total-proper connected. We say that $G$ has the \emph{strong property} if for any pair of vertices $u,v\in V(G)$, there exist two total-proper paths $P_1,P_2$ between them (not necessarily disjoint) such that $(1)\ c(u)\neq start_v(P_i)$ and $c(v)\neq end_v(P_i)$ for $i=1,2$, and $(2)$ both $\{c(u),start_e(P_1),start_e(P_2)\}$ and $\{c(v),end_e(P_1),end_e(P_2)\}$ are $3$-sets.
\end{definition}

The authors in~\cite{Jiang} studied the total-proper connection number of $2$-connected graphs and gave an upper bound.

\begin{thm}\label{thm3}~\cite{Jiang}
Let $G$ be a $2$-connected graph. Then $tpc(G)\leq 4$ and there exists a total-coloring of $G$ with $4$ colors such that $G$ has the strong property.
\end{thm}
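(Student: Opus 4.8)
The natural approach is induction on an open ear decomposition of the $2$-connected graph $G$. Write $G=G_0\cup P_1\cup\cdots\cup P_k$, where $G_0$ is a cycle and each $G_j=G_{j-1}\cup P_j$ is obtained by attaching an ear $P_j$ (a path, or a single edge, whose two endpoints lie in $G_{j-1}$ and whose internal vertices are new); by Whitney's theorem each $G_j$ is again $2$-connected. The plan is to prove by induction on $j$ that every $G_j$ admits a total-coloring with at most $4$ colors having the strong property. Since the strong property exhibits a total-proper path between every pair of vertices, it already implies total-proper connectedness, so applying the statement to $G_k=G$ yields both $tpc(G)\le 4$ and the strong property at once.

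For the base case I would take a \emph{proper total-coloring} of the cycle $G_0$ with $4$ colors; such a coloring exists because it is elementary that every cycle has a proper total-coloring with at most $4$ colors. In a proper total-coloring each subpath of the cycle is automatically total-proper: adjacent edges differ, adjacent internal vertices differ, and each internal vertex differs from its two incident path-edges, which are exactly conditions $(i)$–$(iii)$. For the strong property between $u$ and $v$, let $P_1,P_2$ be the two arcs of the cycle joining them. Condition $(1)$ holds because the first (resp. last) internal vertex of an arc is a neighbor of $u$ (resp. $v$) and adjacent vertices get distinct colors; condition $(2)$ holds because $start_e(P_1),start_e(P_2)$ are the two distinct cycle-edges at $u$, each differing from $c(u)$, so that $\{c(u),start_e(P_1),start_e(P_2)\}$ is a $3$-set, and symmetrically at $v$. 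The degenerate subcase where $u,v$ are adjacent, so that one arc is the single edge $uv$, is covered by the same reasoning together with the paper's conventions for a one-edge path.

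For the inductive step, suppose $G_{j-1}$ carries a $4$-total-coloring with the strong property, and let $P=xu_1\cdots u_\ell y$ be the next ear with $x,y\in V(G_{j-1})$ and $u_1,\dots,u_\ell$ new. I would extend the coloring by coloring the ear edges and the new internal vertices so that $P$ is itself a total-proper path, forcing the two ear-edges at each $u_i$ to be distinct and different from $c(u_i)$, while managing the colors used at the attachment vertices $x,y$. Pairs of vertices already in $G_{j-1}$ keep their two strong-property paths, which still lie in $G_{j-1}\subseteq G_j$, so nothing need be rechecked for them. For a pair involving a new vertex $u_i$, I route the two paths out of $u_i$ in \emph{opposite} directions along the ear, one reaching $x$ and one reaching $y$, and then continue into $G_{j-1}$ via the strong-property paths guaranteed there. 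This is precisely where the strong property is used: the $3$-set condition at $x$ guarantees that among the two available $G_{j-1}$-paths at least one has a first edge whose color differs from the ear-edge at $x$, so the junction can be made edge-proper, while condition $(1)$ keeps the junction vertex-proper. For two new vertices on the same ear I would use the ear segment between them as one path and a detour $u_i\cdots x \rightsquigarrow y\cdots u_{i'}$ through $G_{j-1}$ (using the strong property between $x$ and $y$) as the second.

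The main obstacle, and where the genuine case analysis lives, is verifying the \emph{full} strong property (both $3$-set conditions simultaneously) for the newly created pairs. At the new endpoint $u_i$ the two paths leave along the two ear-edges, which I can force to be distinct and to differ from $c(u_i)$, so that $3$-set comes for free; the delicate part is the far endpoint $z\in V(G_{j-1})$. There the two paths are constrained to enter through $x$ and through $y$ respectively, and I must ensure their last edges, together with $c(z)$, still form a $3$-set even though the edge-properness constraints at the junctions $x,y$ may dictate which of the two strong-property paths I am permitted to use. I expect to resolve this by splitting into cases according to how many of the strong-property paths at $x$ (and at $y$) are junction-compatible, using the one remaining free color to recolor an ear-edge or the vertex $u_i$ when a conflict would otherwise arise, and by treating the shortest ears ($\ell=0$, a chord, and $\ell=1$) separately since they offer the least slack. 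The chord case $\ell=0$ and the case of two new vertices on a single ear should be the tightest, and checking that $4$ colors always leave a usable color at every junction is the heart of the argument.
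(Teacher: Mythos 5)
Two preliminary remarks. First, this paper contains no proof of Theorem~\ref{thm3}: the statement is quoted as a preliminary from~\cite{Jiang}, so there is nothing in the paper itself to compare your argument against; it has to stand on its own. Your choice of scheme is, at least, the natural and standard one --- induction on an open ear decomposition, as in the proof of $pc(G)\le 3$ for $2$-connected graphs in~\cite{Magnant} --- and your base case is sound: a proper total-coloring of a cycle with at most $4$ colors exists, makes every arc total-proper, and yields the strong property via the two arcs joining any two cycle vertices. Likewise, old pairs in $G_{j-1}$ need no rechecking, and the chord case $\ell=0$ is in fact trivial (color the chord arbitrarily; every old path is untouched), not tight as you suggest.

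The genuine gap is the inductive step, which is exactly the part you defer with ``I expect to resolve this by\dots''. Concretely: for a new ear vertex $u_i$ and an old vertex $z$, your two paths must leave $u_i$ along the two ear edges and enter $G_{j-1}$ through $x$ and through $y$, and the induction hypothesis hands you only one pair $R_1,R_2$ of strong paths for $(x,z)$ and one pair $S_1,S_2$ for $(y,z)$. All four end edges at $z$ have colors among the three colors other than $c(z)$, so nothing prevents $\{end_e(R_1),end_e(R_2)\}=\{end_e(S_1),end_e(S_2)\}=\{a,b\}$. If in addition the ear-edge color at $x$ coincides with $start_e$ of the $R_i$ whose end color is $a$, and the ear-edge color at $y$ coincides with $start_e$ of the $S_j$ whose end color is $a$, then the only junction-compatible concatenations both end at $z$ with color $b$, and the $3$-set condition at $z$ fails. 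Your proposed repair --- recolor an ear edge or $c(u_i)$ --- is a single global choice, whereas constraints of this type arise simultaneously from every $z\in V(G_{j-1})$, each with its own quadruple of start/end colors; you never show that one choice of the ear colors serves all $z$ at once, and you have no control over total-proper paths in $G_{j-1}$ other than the two pairs the hypothesis provides, so re-routing inside $G_{j-1}$ is not available to you either. Supplying that case analysis (or strengthening the inductive hypothesis until it closes) is precisely the substance of the theorem; as written, your proposal is an accurate map of where the proof must go, not a proof.
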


From Definition~\ref{def1} and Theorem~\ref{thm3}, we get the following.

\begin{cor}\label{cor2}
Let $H=G\cup \{v\}$ such that $H$ is connected. If there is a total-proper $k$-coloring $c$ of $G$ such that $G$ has the strong property, then $tpc(H)\leq k$.
\end{cor}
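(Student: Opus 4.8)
The plan is to extend the given total-proper $k$-coloring $c$ of $G$ to a total-coloring of $H$ that introduces no new color and still makes $H$ total-proper connected; this yields $tpc(H)\le k$ at once. Since $H$ is connected and $v\notin V(G)$, the vertex $v$ has at least one neighbor $u\in V(G)$. I would keep $c$ unchanged on $G$, give $v$ an arbitrary color, color every edge from $v$ to $G$ arbitrarily with the sole exception of $vu$, and set $c(vu)$ to be any color different from $c(u)$. (We may assume $G$ has at least two vertices, so the strong property is non-vacuous; its $3$-set condition then forces $k\ge 3$, leaving room for this choice. If $G$ is a single vertex the claim is trivial.)

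Every pair of vertices in $V(G)$ is already joined by a total-proper path inside $G$, and such a path is still total-proper in $H$; hence it suffices to connect $v$ to each $w\in V(G)$. For $w=u$ the single edge $vu$ is a total-proper path. For $w\ne u$, I would apply the strong property to the pair $(u,w)$ to obtain two total-proper $u$--$w$ paths $P_1,P_2$ in $G$. Since $\{c(u),start_e(P_1),start_e(P_2)\}$ is a $3$-set, the two first-edge colors $start_e(P_1)$ and $start_e(P_2)$ are distinct, so at least one index $i$ satisfies $start_e(P_i)\ne c(vu)$; fix such an $i$, write $P_i:u=x_1x_2\cdots x_m=w$, and let $Q=vx_1x_2\cdots x_m$.

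It remains to verify that $Q$ is a total-proper $v$--$w$ path. The only newly adjacent pair of edges is $vu$ and $x_1x_2$, and $c(vu)\ne start_e(P_i)=c(x_1x_2)$ by the choice of $i$; all other adjacent edge pairs lie on $P_i$ and are already properly colored. The only vertex whose status changes is $u$, which becomes internal: its incident edges on $Q$ are $vu$ and $x_1x_2$, with $c(u)\ne c(vu)$ by construction and $c(u)\ne start_e(P_i)$ by the $3$-set condition, giving $(iii)$ at $u$; and $c(u)\ne start_v(P_i)=c(x_2)$ by the strong property, giving $(ii)$ for the adjacent internal vertices $u,x_2$. Every remaining internal vertex of $Q$ is already internal on $P_i$ and so still satisfies $(ii)$ and $(iii)$. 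Thus $Q$ is total-proper, $H$ is total-proper connected with $k$ colors, and $tpc(H)\le k$.

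The crux of the argument is the check at the promoted vertex $u$: having two $u$--$w$ paths with distinct starting-edge colors is exactly what lets me route around the color $c(vu)$, while the $3$-set condition together with $c(u)\ne start_v(P_i)$ ensures that $u$ behaves correctly once it ceases to be an endpoint. Note that the endpoint $v$ imposes no constraints at all, which is why its color and the colors of all other edges at $v$ are irrelevant and a single neighbor $u$ suffices; this reliance on the flexibility of the strong property, rather than on mere total-proper connectedness, is what makes the conclusion go through.
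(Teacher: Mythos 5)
Your proof is correct and is essentially the argument the paper has in mind: the paper states this corollary as an immediate consequence of Definition~\ref{def1} (the strong property), and your extension --- coloring $vu$ differently from $c(u)$, then using the $3$-set condition to pick the path $P_i$ with $start_e(P_i)\neq c(vu)$ and condition $(1)$ to handle $u$ becoming an internal vertex --- is exactly the routine verification being left to the reader. No gaps; the case analysis at the promoted vertex $u$ (including the single-edge case $P_i=uw$) is handled correctly.
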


We also study the total-proper connection number of $H$ when $G$ is a complete bipartite graph, and get the exact value of $tpc(H)$.

\begin{lem}\label{lem1}
Let $H=K_{s,t}\cup \{v\}$ such that $H$ is connected, where $s\geq t \geq 2$. Then $tpc(H)=3$.
Moreover, $tpc(H')=3$.
\end{lem}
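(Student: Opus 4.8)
The plan is to establish the two bounds $tpc(H)\ge 3$ and $tpc(H)\le 3$ separately, and likewise for $H'$. The lower bound is immediate: $H$ is connected but not complete, since the $s\ge 2$ vertices of the larger part stay pairwise nonadjacent after $v$ is attached, so Proposition~\ref{pro1}$(ii)$ gives $tpc(H)\ge 3$. Thus the whole task is to exhibit a total-proper coloring of $H$ using only three colors. Writing $A$ and $B$ for the two parts of $K_{s,t}$ with $|A|=s\ge t=|B|$, I note that $H-v=K_{s,t}$ has $tpc=3$ by Theorem~\ref{thm2}; the guiding idea is to fix a total-proper $3$-coloring $c$ of $K_{s,t}$, leave it untouched on $K_{s,t}$, and only color $v$ together with the edges from $v$ to its neighbours. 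Since every pair of old vertices keeps its former total-proper path, it then suffices to connect $v$ by a total-proper path to each remaining vertex.

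First I would dispose of the cases in which $H$ is traceable: here $H$ is not complete, so Corollary~\ref{cor1} yields $tpc(H)=3$ at once. This already covers the balanced configurations, for instance every $K_{2,2}\cup\{v\}$, which always has a Hamiltonian path. The genuine work is the non-traceable regime---roughly, when $A$ is much larger than $B$ and the neighbours of $v$ lie on the small side $B$---where Corollary~\ref{cor1} is unavailable and I would build the coloring by hand.

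For that regime I would take a concrete three-coloring of $K_{s,t}$ of the following shape: give the $t$ vertices of $B$ fixed colors, and color the edges from each $a_i$ to $B$ so that the resulting color patterns of the $a_i$ are pairwise distinguishable. This guarantees a total-proper two-edge path $a_ib_ka_j$ (or, when both length-two routes clash, a length-four detour through a second $B$-vertex) between any two vertices of the independent set $A$, a path $b_ka_\ell b_m$ between any two vertices of $B$, and direct edges for the mixed pairs. I would then choose $c(v)$ and the colors on the edges at $v$ so that, for every target $x$, at least one neighbour $w$ of $v$ admits a total-proper path $vw\cdots x$ with no clash at $w$. The hard part will be exactly this verification under three colors: when $t=2$ the vertices of $A$ have degree two, so their incident edge colors are nearly forced, and one checks (already on $K_{2,2}$) that the full strong property of Definition~\ref{def1} cannot be achieved with three colors; hence Corollary~\ref{cor2} cannot be applied as a black box, and the total-proper conditions $(i)$--$(iii)$ must be checked directly along each chosen path, the most delicate instances being the pairs inside $A$ and the pairs $(v,x)$ with $x\in A$.

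Finally, the assertion $tpc(H')=3$ follows by the same scheme. The lower bound is again Proposition~\ref{pro1}$(ii)$, and the same base coloring of $K_{s,t}$ together with the attachment of the extra vertex prescribed by $H'$ produces a total-proper $3$-coloring, only the bookkeeping of which edge receives which color being different; in the subcases where $H'$ is traceable the value $3$ is immediate from Corollary~\ref{cor1}.
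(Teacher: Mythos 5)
Your overall framing (lower bound from noncompleteness via Proposition~\ref{pro1}, traceable cases dispatched by Corollary~\ref{cor1}, then a hand-built coloring for the rest) matches the paper's strategy, but your proposal stops exactly where the lemma's real content begins: for the non-traceable cases you never write down a total-proper $3$-coloring, and you never verify the total-proper conditions along any path. Worse, the one concrete design principle you do state --- color the edges from each $a_i$ to $B$ so that the ``color patterns of the $a_i$ are pairwise distinguishable'' --- is infeasible in precisely the regime you reserve it for: with three colors there are at most $3^t$ patterns (at most $2^t$ if each edge is to avoid the color of its $B$-endpoint, as any two-edge path through that endpoint requires), so once $s$ exceeds this bound the patterns must repeat and distinguishability is impossible. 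Your parenthetical fallback (``a length-four detour through a second $B$-vertex'') is in fact the heart of the matter, but you neither fix a coloring under which such detours always exist nor check them, and the same gap recurs for the pairs $(v,x)$ with $x\in A$ and for $H'$. Declaring this verification ``the hard part'' and leaving it open means what you have is a plan, not a proof.

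For comparison, the paper resolves the issue with completely explicit colorings organized around a small ``hub'' rather than around distinguishable patterns. For $t=2$ it sets $c(w_1)=c(u_1w_2)=1$, $c(w_2)=c(u_1w_1)=2$ and gives everything else color $3$, so that every pair in the independent part is joined by a path of the form $u_iw_1u_1w_2u_j$ (edge colors $3,2,1,3$), and $v$ simply hooks into this path; for $t\geq 3$, $s\geq 4$ it colors a $6$-cycle $u_1w_1u_2w_2u_3w_3$ cyclically with the three colors, assigns default colors elsewhere, and routes every pair through this cycle, with a separate subcase when no $6$-cycle avoiding $v$'s attachment exists (there necessarily $t=3$ and $v$ attaches to $W$). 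Your extension idea --- keep a $3$-coloring of $K_{s,t}$ untouched and only color $v$ and its incident edges --- is sound in principle and is implicitly what the paper does, but it works only for a carefully chosen coloring of $K_{s,t}$, not for an arbitrary one supplied by Theorem~\ref{thm2}; producing and verifying such a coloring is exactly the content that is missing from your proposal.
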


\begin{figure}[h,t,b,p]
\centering
\scalebox{1.2}[1.2]{\includegraphics{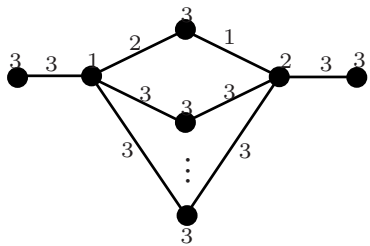}}\\
\caption{The graph $H'$}
\end{figure}

\begin{proof}
Let $U$ and $W$ be the two partite sets of $K_{s,t}$, where $U=\{u_1,\dots,u_s\}$ and $W=\{w_1,\dots,w_t\}$. Since $H$ and $H'$ are both noncomplete, we only need to prove $tpc(H)\leq 3$ and $tpc(H')\leq 3$, i.e., demonstrating a total-proper $3$-coloring of $H$ or $H'$. We divide our discussion according to the value of $t$.

{\bf Case~$1$.} $t=2$

If $v$ is adjacent to $W$, say $vw_1\in E(H)$, then set $c(w_1)=c(u_1w_2)=1$, and $c(w_2)=c(u_1w_1)=2$. Assign all the remaining vertices and edges with color $3$. Thus, there is a total-proper path $u_iw_1u_1w_2u_j$ connecting $u_i$ and $u_j$, where $2\leq i,j\leq s$. As for the rest of vertex pairs, we can always find a path contained in the path $vw_1u_1w_2u_i$ for some $2\leq i\leq s$. If there is another vertex $v'$ adjacent to $w_2$, based on the above coloring, set $c(v')=c(v'w_2)=3$, then we obtain a total-proper 3-coloring of $H'$, see Fig.1.

If $v$ is adjacent to $U$, say $vu_1\in E(H)$, then set $c(w_1)=c(u_2)=c(u_1w_2)=1$, and $c(w_2)=c(u_1w_1)=c(u_2w_1)=c(vu_1)=2$. Assign all the remaining vertices and edges with color $3$. Thus, there is a total-proper path, contained in the path $vu_1w_2u_2w_1$ or $vu_1w_2u_i$ for some $3\leq i\leq s$, connecting $v$ and any other vertex in $H$. And for vertex pairs in $U\cup W$, there is a total-proper path contained in the path $u_iw_2u_1w_1u_j$ for some $2\leq i<j\leq s$.

{\bf Case~$2$.} $t\geq 3$

If $s=t=3$, then $H$ is traceable so that $tpc(H)=3$.
If $s\geq 4$, we consider two subcases.

$1)$ Assume there is a $6$-cycle $C_6$ in $K_{s,t}$ such that $H-C_6$ is still connected. Without loss of generality, we suppose $C_6=u_1w_1u_2w_2u_3w_3$. We color $C_6$ with the colors $1,2,3$ by the sequence of vertices and edges on the cycle. That is, set $c(u_1)=c(w_2)=c(w_1u_2)=c(u_3w_3)=1$, $c(u_2)=c(w_3)=c(u_1w_1)=c(w_2u_3)=2$, and $c(w_1)=c(u_3)=c(u_2w_2)=c(w_3u_1)=3$.
Let $i,j\geq 4$ be two integers. Assign $u_i$ and $u_3w_j$ (if any) with color $1$, and assign $w_j$ and $w_1u_i$ with color $2$. The remaining vertices and edges are all colored $3$. Then we claim that this total-coloring makes $H$ total-proper connected. Any pair $(u_i,w_j) \in U\times W$ is connected by the edge $u_iw_j$. The total-proper path for the pairs from $U\times U$ is contained in the path $P=u_iw_1u_2w_2u_3w_3u_j$ for some $1\leq i,j \leq s$. And the total-proper path for the pairs from $W\times W$ is contained in the path $P=w_iu_1w_1u_2w_2u_3w_j$ for some $1\leq i,j \leq t$. Now consider the pairs of $\{v\}\times (U\cup W)$. By the assumption, we know that $vu_\ell\in E(H)$ or $vw_\ell\in E(H)$ for $\ell\geq 4$. Without loss of generality, suppose $\ell=4$. If $vu_4\in E(H)$, then for pairs $(v,u_i)$ ($1\leq i \leq s$) there is a total-proper path contained in the path $P=vu_4w_1u_2w_2u_3w_3u_j$ for some $1\leq j \leq s$, and for pairs $(v,w_i)$ ($1\leq i \leq t$) there is a total-proper path contained in the path $P=vu_4w_1u_2w_2u_3w_j$ for some $1\leq j \leq t$. The case when $vw_4\in E(H)$ is similar.

$2)$ Assume there is not such a $6$-cycle in subcase $1)$. As $s\geq 4$ we can deduce that $t=3$ and $v$ is only adjacent to $W$, say $vw_2\in E(H)$. Then we color $H$ as above. Then it is sufficient to check the pairs in $\{v\}\times (U\cup W)$. For pairs in
$\{v\}\times U$, there is a total-proper path $P=vw_2u_3w_3u_i$ for some $1\leq i\leq s$, and for pairs in $\{v\}\times W$, we can find a total-proper path contained in the path $P=vw_2u_3w_3u_1w_1$.

The proof is complete.
\end{proof}

\section{Upper bound on $tpc(G)+tpc(\overline{G})$}

At the beginning of this section, we give total-proper connection numbers of four unicyclic graphs, which are useful to characterize the graphs on $n$ vertices that have total-proper connection number $n-2$.

\begin{lem}\label{lem2}
Let $H_1,H_2,H_3$ and $H_4$ be the graphs on $n\geq 5$ vertices shown in the Fig.~$2$, respectively. Then $tpc(H_1)=n-2$; $tpc(H_2)=n-2$ if $n=5$, $tpc(H_2)=n-3$ if $n\geq 6$; and for $i=3,4$, $tpc(H_i)=n-2$ if $n=5$ or $6$, $tpc(H_i)=n-3$ if $n\geq 7$.
\end{lem}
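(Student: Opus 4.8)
The plan is to bound each $tpc(H_i)$ from below by its bridge structure and from above by an explicit total-proper coloring, and then to match the two bounds in every relevant range of $n$. Since each $H_i$ is unicyclic, all of its edges off the unique cycle are bridges; let $u_i$ denote the vertex incident with the maximum number $b_i$ of them. Proposition~\ref{pro3} then gives $tpc(H_i)\ge b_i+1$ immediately, and whenever $b_i+1\le 2$ I would instead invoke Proposition~\ref{pro1}$(ii)$, since every $H_i$ is noncomplete, to get $tpc(H_i)\ge 3$. Reading the value $b_i$ from Fig.~$2$ supplies the ``generic'' lower bounds: the vertex carrying $n-3$ pendant edges in $H_1$ forces $tpc(H_1)\ge n-2$, while the corresponding count in $H_2,H_3,H_4$ is $n-4$, giving $tpc\ge n-3$ for large $n$.

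For the upper bounds I would exhibit a coloring attaining the bound. Two facts guide the construction: (a) the pendant edges at $u_i$ must receive pairwise distinct colors, since the only path between two leaves is $p\,u_i\,p'$ and it is improper at $u_i$ when $c(u_ip)=c(u_ip')$, and moreover $c(u_i)$ must differ from all of them, which already consumes $b_i+1$ colors; (b) a leaf endpoint is never an internal vertex of any path, so its vertex color is free. I would therefore color the pendant edges $1,\dots,b_i$, set $c(u_i)=b_i+1$, and reuse these colors on the cycle, giving the two cycle-edges at $u_i$ two different colors. Then every leaf reaches the cycle along whichever of the two directions avoids a color clash at $u_i$ (at most one cycle-edge can share its color), and the rest of the cycle is colored so that conditions $(ii),(iii)$ hold along these routes. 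Total-properness then reduces to a finite check over the pair-types leaf--leaf, leaf--cycle, and cycle--cycle.

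The drop from $n-2$ to $n-3$ at a graph-dependent threshold occurs precisely because the $b_i+1$ forced colors must also suffice to color the cycle and carry the leaf-to-cycle routes, and this depends on the cycle length read from Fig.~$2$. For $H_1$ the cycle is a triangle, three colors always color it, so no extra color is ever needed and $tpc(H_1)=n-2$ for all $n\ge 5$. For $H_2$ the bridge bound $b_i+1=n-3$ is tight as soon as it exceeds the universal bound $3$, i.e. for $n\ge 6$; at $n=5$ the graph is traceable and Corollary~\ref{cor1} gives $tpc=3=n-2$. For $H_3,H_4$ the cycle needs a fourth color to be colored and routed through, so when only $b_i+1=3$ colors are forced (at $n=6$) a fourth is genuinely unavoidable and the value rises to $n-2$; once $b_i+1\ge 4$ (i.e. $n\ge 7$) the forced palette already colors the cycle and the bridge bound becomes tight, yielding $tpc=n-3$. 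At $n=5$ these graphs are traceable, so again $tpc=3$.

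I expect the main obstacle to be exactly the two exceptional lower bounds for $H_3,H_4$ at $n=6$, where the bridge bound is not tight and I must prove that three colors genuinely fail. The cleanest route is to assume a total-proper $3$-coloring exists, use (a) to pin down the two pendant-edge colors and $c(u_i)$, and then trace a leaf-to-opposite-cycle-vertex path: the forced colors of the two cycle-edges at $u_i$ together with conditions $(ii),(iii)$ leave no consistent choice for the internal cycle-vertices, a contradiction. The matching upper-bound colorings are routine but tedious, the only delicate point being to keep adjacent internal vertices differently colored along every chosen route.
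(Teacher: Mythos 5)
Your skeleton is the same as the paper's: lower bounds from Proposition~\ref{pro3}, traceability and Corollary~\ref{cor1} at $n=5$, explicit colorings for the upper bounds, and a separate impossibility argument for $H_3,H_4$ at $n=6$. But the step you yourself single out as the crux contains a genuine error for $H_3$. Write $H_3$ (at $n=6$) as the triangle $uvw$ with pendant vertices $p_1,p_2$ attached at $u$ and pendant vertex $q$ attached at $v$. Your proposed contradiction --- pin $c(up_1)=1$, $c(up_2)=2$, $c(u)=3$, then trace leaf-to-cycle-vertex paths and show that no consistent choice of the cycle colors exists --- fails: the choice $c(uv)=2$, $c(v)=1$, $c(uw)=1$, $c(w)=2$, $c(vw)=3$ makes every pair among $\{p_1,p_2,u,v,w\}$ total-proper connected (the paths $p_1uv$, $p_1uvw$, $p_2uw$, $p_2uwv$ are all total-proper). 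This is no accident: $H_3$ minus $q$ is exactly $H_1$ on five vertices, whose total-proper connection number is $3$, so no argument that ignores $q$ can possibly reach a contradiction. The real obstruction involves the pendant at $v$: with the colors above, the pair $p_1,q$ forces $c(vq)=3$ (the route $p_1uwvq$ is blocked because $c(up_1)=c(uw)$, so $p_1uvq$ must be used), while the pair $p_2,q$ must use $p_2uwvq$ (since $c(uv)=c(up_2)$) and hence needs $c(vq)\neq c(vw)=3$, a contradiction. That is precisely why the paper phrases the failure as two \emph{pendant} vertices being disconnected. For $H_4$ your trace does work, since there the leaf-to-cycle-vertex pairs alone are already inconsistent: the routes to the antipodal vertex $w$ force colors under which a leaf gets disconnected from $v$ or from $x$.

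Two smaller points. First, concluding that the value ``rises to $n-2$'' at $n=6$ for $H_3,H_4$ also requires the upper bound $tpc(H_i)\leq 4$, which your generic scheme (built from only $b_i+1=3$ forced colors) does not supply; the paper obtains it from Proposition~\ref{pro2} applied to a spanning tree of maximum degree $3$ together with Theorem~\ref{thm1}. Second, in $H_2$ not all bridges at the high-degree vertex are pendant edges (one of them leads to the degree-two vertex $x$ carrying the last bridge), so your fact (a), stated for pendant edges only, undercounts the forced colors there, and your coloring scheme must also check the pairs routed through $x$; the paper handles this by recoloring that last bridge with color $1$ and setting $c(x)=3$. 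These issues are repairable, but as written the central $H_3$ case would not go through.
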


\begin{figure}[h,t,b,p]
\centering
\scalebox{1.2}[1.2]{\includegraphics{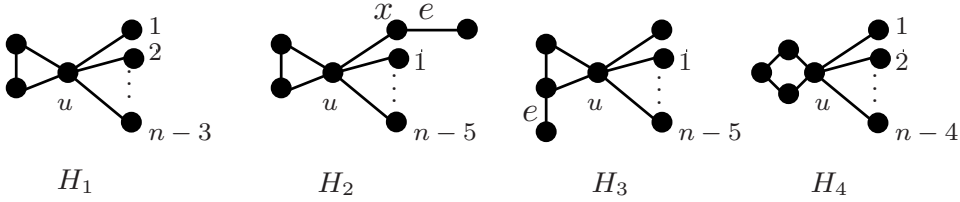}}\\
\caption{The graphs $H_1$, $H_2$, $H_3$ and $H_4$.}
\end{figure}

\begin{proof} By Proposition~\ref{pro3}, we get $tpc(H_1)\geq n-2$ and $tpc(H_i)\geq n-3$ for $i\in \{2,3,4\}$.

For $i=1,2,3$, let $uvw$ be the triangle in $H_i$ and let $e_1$, $e_2$,\dots, and $e_{n-3}$ denote the bridges in $H_i$. Assume that $e=e_{n-3}$ in the graphs $H_2$ and $H_3$, and the edge $e$ is incident with the vertex $x$ and adjacent to the bridge $e_1$ in $H_2$, and $e$ is incident with the vertex $v$ in $H_3$. We first consider the graph $H_1$ and demonstrate a total-coloring of it with $n-2$ colors. Let $c(u)=c(vw)=1$, $c(e_j)=j+1$ for $1\leq j\leq n-3$, $c(uv)=c(w)=2$ and $c(v)=c(wu)=3$. The remaining vertices are all colored~$1$. It is easy to check this total-coloring makes $H_1$ total-proper connected. Hence, we have $tpc(H_1)=n-2$ when $n\geq 5$.

We should point out that for $i=2,3,4$, the graph $H_i$ is traceable when $n=5$, hence $tpc(H_i)=3$ by Corollary~\ref{cor1}. So we assume $n\geq 6$. Consider the graph $H_2$. Color as $H_1$ only with the exception that $c(e_{n-3})=1$ and $c(x)=3$. It is easy to check that under this total-coloring, $H_2$ is total-proper connected. Hence, we have $tpc(H_2)=n-2$ when $n=5$ and $tpc(H_2)=n-3$ when $n\geq 6$.

Consider the graph $H_3$. When $n=6$, we claim that $tpc(H_3)=4$. From Proposition~\ref{pro2}, we get that $tpc(H_3)\leq 4$. If we use 3 colors to total-color $H_3$, 
no matter how we color it, there always exist two pendant vertices not being connected by a total-proper path. When $n\geq 7$, it can be easily checked that the total-coloring of $H_2$, 
only with the exception that $c(e)=4$, makes $H_3$ total-proper connected. Hence, we have $tpc(H_3)=n-2$ when $n=5,6$ and $tpc(H_3)=n-3$ when $n\geq 7$.

Now we consider the graph $H_4$. We use $e_1$, $e_2$,\dots, and $e_{n-4}$ to denote the bridges incident with $u$, respectively, 
and use $uvwx$ to denote the quadrangle in $H_4$. 
First, we consider the case $n\geq 7$. We demonstrate a total-coloring of $H_4$ with $n-3$ colors. Let $c(e_j)=j$ for $1\leq j\leq n-4$, $c(u)=n-3$, $c(v)=c(x)=2$, 
$c(vw)=c(xu)=3$ and $c(w)=4$. 
The remaining edges and vertices are all colored~$1$. It is easy to check that under this total-coloring, $H_4$ is total-proper connected. 
When $n=6$, we claim that $tpc(H_4)=4$. From Proposition~\ref{pro2}, we get that $tpc(H_4)\leq 4$.
If we use 3 colors to total-color $H_4$, no matter how we color it, there always exists a vertex pair not being connected by a total-proper path. 
Hence, we have $tpc(H_4)=n-2$ when $n=5,6$ and $tpc(H_4)=n-3$ when $n\geq 7$.
\end{proof}

We use $C_n$ and $S_n$ to denote the cycle and the star on $n$ vertices, respectively, and use $T(a,b)$ to denote the double star that is obtained by adding an edge between the center vertices of $S_a$ and $S_b$. Given a cycle $C_r=v_1v_2\dots v_r$, let $C_r(T_1,T_2,\dots,T_r)$ be the graph obtained from $C_r$ and rooted trees $T_i$ by identifying the root, say $r_i$, of $T_i$ with $v_i$ on $C_r$, $i=1,2,\dots, r$. We assume that $|T_i|=n_i, n_i\geq 1, i=1,2,\dots,r$. Then $|C_r(T_1,T_2,\dots,T_r)|=\sum_{i=1}^r|T_i|$. In particular, if $|T_i|=1$ for each $i\in\{1,2,\dots, r\}$, the graph $C_r(T_1,T_2,\dots,T_r)$ is just the cycle $C_r$. For a nontrivial graph $G$ such that $G+uv\cong G+xy$ for every two pairs $(u,v)$, $(x,y)$ of nonadjacent vertices of $G$, we use $G+e$ to denote the graph obtained from $G$ by joining two nonadjacent vertices of $G$.

\begin{thm}\label{thm4}
Let $G$ be a connected graph of order $n\geq 3$. Then $tpc(G)=n-1$ if and only if $G\in \{T(2,n-2), C_4, C_4+e,S_4+e\}$.
\end{thm}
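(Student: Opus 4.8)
The plan is to run a case analysis on the maximum degree $\Delta(G)$, combining the spanning-tree upper bound of Proposition~\ref{pro2} with the exact tree value of Theorem~\ref{thm1} to pin down the shape of $G$. For the \emph{sufficiency}, I would check each listed graph directly: $T(2,n-2)$ is a tree with $\Delta=n-2$, so Theorem~\ref{thm1} gives $tpc(T(2,n-2))=\Delta+1=n-1$; and each of $C_4$, $C_4+e$, $S_4+e$ has $n=4$, is noncomplete, and is traceable (a Hamiltonian path is immediate in each), so Corollary~\ref{cor1} yields $tpc=3=n-1$.

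For the \emph{necessity}, assume $tpc(G)=n-1$. Then $G\neq K_n$ by Proposition~\ref{pro1}, and since any spanning tree $T$ satisfies $tpc(G)\le tpc(T)=\Delta(T)+1\le\Delta(G)+1$ (Proposition~\ref{pro2} and Theorem~\ref{thm1}), we obtain $\Delta(G)\ge n-2$, i.e. $\Delta(G)\in\{n-2,n-1\}$. The first split is tree versus cyclic. If $G$ is a tree, then $tpc(G)=\Delta(G)+1=n-1$ forces $\Delta(G)=n-2$; a short counting argument shows that the only tree on $n$ vertices carrying a vertex $v$ of degree $n-2$ is the star $S_{n-1}$ centered at $v$ with the remaining vertex attached to one leaf, that is, $G\cong T(2,n-2)$. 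This disposes of the tree case (and automatically excludes $S_n$, whose $tpc$ is $n$).

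It remains to show that a cyclic $G$ cannot attain $n-1$ once $n\ge 5$, after which $n=4$ is handled by enumerating the six connected graphs (only $P_4=T(2,2)$, $C_4$, $S_4+e$, $C_4+e$ reach $tpc=3=n-1$). So let $n\ge 5$ and let $G$ be cyclic. If $\Delta(G)=n-1$, then $G$ has a universal vertex $v$, and being cyclic it contains an edge joining two neighbours of $v$; hence $G$ has, as a connected spanning subgraph, a star $S_n$ together with one leaf--leaf edge, which is precisely the graph $H_1$ of Lemma~\ref{lem2}, so $tpc(G)\le tpc(H_1)=n-2<n-1$, a contradiction. If $\Delta(G)=n-2$, I would argue $tpc(G)\le n-2$ as follows. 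When the maximum-degree vertex $v$ is unique, some edge at $v$ lies on a cycle (otherwise all edges at $v$ are bridges and, since $v$ misses only one vertex, $G$ is forced to be acyclic); deleting this edge leaves a connected spanning subgraph in which $v$ has degree $n-3$ and every other vertex has degree $\le n-3$, so any spanning tree has $\Delta\le n-3$ and $tpc(G)\le n-2$. When two vertices of degree $n-2$ are nonadjacent, each is the other's unique non-neighbour, so $G$ contains a spanning $K_{2,n-2}$ and $tpc(G)\le tpc(K_{2,n-2})=3$ by Theorem~\ref{thm2}; when all such vertices are pairwise adjacent, I would build a bounded-degree spanning tree by joining two of them by an edge and distributing the remaining $n-2$ vertices between them, again giving $\Delta\le n-3$. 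In every subcase $tpc(G)\le n-2<n-1$, contradicting $tpc(G)=n-1$.

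The main obstacle is exactly this last case, $\Delta(G)=n-2$ with $G$ cyclic: unlike the universal case, the spanning-tree bound is sharp enough only after the cycle is used to shave one incidence at a top-degree vertex, and the presence of several degree-$(n-2)$ vertices must be handled with care. In particular, for small $n$ (notably $n=5$) the naive ``distribute the low-degree vertices between two high-degree vertices'' construction can overflow the degree budget, and one must route a vertex as a grandchild or invoke traceability or the spanning $K_{2,n-2}$ to recover $tpc(G)\le n-2$. Verifying that these sub-subcases are exhaustive, and that each indeed forces $tpc(G)\le n-2$ for all $n\ge 5$, is where the bulk of the bookkeeping lies.
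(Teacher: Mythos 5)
Your proposal is correct, but it takes a genuinely different route from the paper's proof. The paper organizes the necessity argument around a \emph{longest cycle} $C_k$ of $G$: it takes a unicyclic spanning subgraph $H=C_k(T_1,\dots,T_k)$, deletes a cycle edge at the root of a subtree of maximum degree to obtain a spanning tree $H'$ with $\Delta(H')\le n-k+1$, so that $n-1=tpc(G)\le \Delta(H')+1\le n-k+2$ forces $k=3$ with equality throughout; the equality analysis then pins $H$ down to $S_n+e$ (which is the graph $H_1$ of Lemma~\ref{lem2} when $n\ge 5$) or a traceable graph on $5$ vertices, and Lemma~\ref{lem2} gives the contradiction $tpc(G)\le n-2$, leaving only $n=4$ and $G\cong S_4+e$. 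You instead organize by \emph{maximum degree}: the spanning-tree bound forces $\Delta(G)\in\{n-2,n-1\}$; the cyclic case with $\Delta(G)=n-1$ again reduces to a spanning $H_1$ (so you, too, ultimately lean on Lemma~\ref{lem2}); and the case $\Delta(G)=n-2$ splits into three subcases settled respectively by deleting a cycle edge at the unique maximum-degree vertex, by a spanning $K_{2,n-2}$ via Theorem~\ref{thm2} (a tool the paper's proof never needs), or by an explicit spanning tree of maximum degree at most $n-3$. The paper's approach buys uniformity: one counting inequality kills all cyclic cases at once and the equality analysis stays localized. Yours is more elementary but pays in subcases, exactly at the spot you flag: for $n=5$ the even distribution overflows, yet your suggested remedies do close it (when the two degree-$(n-2)$ vertices $u,v$ have distinct non-neighbours, the graph on $5$ vertices is traceable, so Corollary~\ref{cor1} applies; when they share a non-neighbour $x$ with neighbour $w$, the tree consisting of the path $xwuv$ with all remaining vertices attached to $v$ has maximum degree $n-3$ for every $n\ge 5$). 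Both proofs share the same endgame: every contradiction for $n\ge 5$ flows through $tpc(H_1)=n-2$, and $n=4$ is finished by inspection.
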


\begin{proof}By Theorem~\ref{thm1} and Corollary~\ref{cor1}, we can easily check that $tpc(G)=n-1$ if $G$ is one of the above four graphs. 
So we concentrate on the verification of the converse of the theorem. Suppose that $tpc(G)=n-1$. Then $G$ cannot be complete, so $tpc(G)\geq 3$. 
If $G$ is a tree, then by Theorem~\ref{thm1}, we have $\Delta(G)=n-2$, thus $G\cong T(2, n-2)$. Now, we consider the case that $G$ contains cycles. 
Pick a longest cycle $C_k=v_1v_2...v_k$ of $G$, where $k\geq 3$. If $k=n$, then $3=tpc(C_k)=tpc(G)=n-1$.
So $n=4$. Thus $G\cong C_4$ or $C_4+e$. If $k<n$, consider a unicyclic spanning subgraph $H$ of $G$ containing the cycle $C_k$. 
Then $H$ can be written as $C_k(T_1,T_2,...,T_k)$. Set $r=max\{\Delta(T_i):1\leq i\leq k\}$ and let $T_\ell$ be a tree with $\Delta(T_\ell)=r$. 
Notice that $\Delta(T_\ell)\leq |T_\ell|-1\leq n-k$, so $r\leq n-k$. Then delete an edge $e$ of $H$, which is incident with $v_\ell$ in $C_k$, and denote 
the obtained graph as $H'$, so $H'$ is a spanning tree of $G$ and $\Delta (H')\leq n-k+1$, 
and the equality holds if and only if there is only one 
nontrivial subtree $T_\ell=S_{n-k+1}$ in $H$ whose 
center is $v_\ell$ or there are exactly two pendant edges 
attaching to $C_k$. Thus $n-1=tpc(G)\leq tpc(H')=\Delta(H')+1\leq n-k+2$,
therefore we have $k\leq 3$. So $k=3$ and all the equalities must hold. 
Hence, there is only one nontrivial subtree in $H$ and $\Delta(H)=n-1$ or $H$ is traceable on~$5$ vertices, 
the latter contradicting the condition $tpc(G)=n-1$. 
So we can identify $H$ as $S_n+e$, and when $n\geq 5$, the graph $H$ is just the graph $H_1$ in Fig.~$2$.
By Lemma~\ref{lem3} and Proposition~\ref{pro2}, we have $tpc(G)\leq tpc(H_1)=n-2$, a contradiction. 
So $n=4$ and $G\cong S_4+e$ since $C_3$ is a longest cycle of $G$.
\end{proof}

We know that if $G$ and $\overline{G}$ are connected complementary graphs on $n$ vertices, then $n$ is at least~$4$, and $\Delta(G)\leq n-2$. Therefore, we get that $tpc(G)\leq n-1$. Similarly, we have $tpc(\overline{G})\leq n-1$. Hence, we obtain that $tpc(G)+tpc(\overline{G})\leq 2(n-1)$. For $n=4$, it is obvious that $tpc(G)+tpc(\overline{G})=6$ if both $G$ and $\overline{G}$ are connected. In the rest of this section, we always assume that all graphs have at least~$5$ vertices, and both $G$ and $\overline{G}$ are connected.

\begin{lem}\label{lem3}
Let $G$ be a graph on~$5$ vertices. If both $G$ and $\overline{G}$ are connected, then we have
\begin{equation*}
tpc(G)+tpc(\overline{G})=\left\{\begin{array}{ll}
7  & \text{if $G\cong T(2,3)$ or $\overline{G}\cong T(2,3)$;}\\
6  & \text{otherwise.}
\end{array}\right.
\end{equation*}
\end{lem}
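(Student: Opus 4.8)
The plan is to deduce the lemma almost entirely from Theorem~\ref{thm4}, reducing the problem to a short case analysis. First I would pin down the range of $tpc(G)$. Since $G$ and $\overline{G}$ are both connected, $G$ is noncomplete, so $tpc(G)\geq 3$ by Proposition~\ref{pro1}; moreover no vertex of $G$ can have degree $n-1=4$, for otherwise it would be isolated in $\overline{G}$, forcing $\overline{G}$ to be disconnected. Hence $\Delta(G)\leq 3$, and taking any spanning tree $T$ of $G$ we have $\Delta(T)\leq\Delta(G)\leq 3$, so by Proposition~\ref{pro2} and Theorem~\ref{thm1}, $tpc(G)\leq tpc(T)=\Delta(T)+1\leq 4$. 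Thus $tpc(G)\in\{3,4\}$, and symmetrically $tpc(\overline{G})\in\{3,4\}$, so a priori the sum lies in $\{6,7,8\}$.

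Next I would identify exactly when the value $4$ is attained. Applying Theorem~\ref{thm4} with $n=5$, the graphs on five vertices with $tpc=n-1=4$ are precisely those in $\{T(2,3),C_4,C_4+e,S_4+e\}$; since the last three graphs have only four vertices, the only five-vertex graph achieving $tpc=4$ is $T(2,3)$. Hence $tpc(G)=4$ if and only if $G\cong T(2,3)$, and otherwise $tpc(G)=3$; the same statement holds for $\overline{G}$.

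The single computation that collapses the trichotomy to the claimed dichotomy is the value of $tpc\bigl(\overline{T(2,3)}\bigr)$. Writing $T(2,3)$ on vertices $\{a,a',b,b_1,b_2\}$ with edge set $\{aa',ab,bb_1,bb_2\}$, I would pass to the complement, whose edge set is $\{ab_1,ab_2,a'b,a'b_1,a'b_2,b_1b_2\}$, and exhibit the Hamiltonian path $b\,a'\,b_1\,b_2\,a$ in it. Since $\overline{T(2,3)}$ is then traceable and noncomplete, Corollary~\ref{cor1} gives $tpc\bigl(\overline{T(2,3)}\bigr)=3$. In particular $\overline{T(2,3)}\not\cong T(2,3)$, so $G$ and $\overline{G}$ can never be simultaneously isomorphic to $T(2,3)$, which rules out the value $8$.

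Finally I would assemble the cases. If $G\cong T(2,3)$, then $\overline{G}\cong\overline{T(2,3)}$, whence $tpc(G)=4$ and $tpc(\overline{G})=3$, giving the sum $7$; the symmetric argument yields the same when $\overline{G}\cong T(2,3)$. If neither $G$ nor $\overline{G}$ is isomorphic to $T(2,3)$, then both contribute $3$ and the sum is $6$. Combined with the exclusion of $8$ from the previous paragraph, this exhausts all possibilities and matches the stated formula. I expect no genuine obstacle here: essentially everything is bookkeeping on top of Theorem~\ref{thm4}, and the only nonroutine step is the explicit check that $\overline{T(2,3)}$ is traceable, which the Hamiltonian path above settles.
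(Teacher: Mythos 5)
Your proof is correct and takes essentially the same approach as the paper: both arguments rest on Theorem~\ref{thm4} applied with $n=5$ (observing that $C_4$, $C_4+e$, $S_4+e$ have only four vertices, so $T(2,3)$ is the unique five-vertex extremal graph), combined with the lower bound $tpc\geq 3$ from Proposition~\ref{pro1} and the spanning-tree upper bound. The only cosmetic difference is that you compute $tpc\bigl(\overline{T(2,3)}\bigr)=3$ by exhibiting an explicit Hamiltonian path and citing Corollary~\ref{cor1}, whereas the paper obtains the same value implicitly from Theorem~\ref{thm4} (the complement is not in the extremal list, hence $tpc\leq n-2=3$).
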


\begin{proof} If $G\cong T(2,3)$ or $\overline{G}\cong T(2,3)$, then from Theorem~\ref{thm4}, we can easily get that $tpc(G)+tpc(\overline{G})=7$. Otherwise, we have $tpc(G)\leq n-2=3$ and $tpc(\overline{G})\leq n-2=3$. Combining with Proposition~\ref{pro1}, we get $tpc(G)+tpc(\overline{G})=3+3=6$ if $G\ncong T(2,3)$ and $\overline{G}\ncong T(2,3)$.
\end{proof}

\begin{thm}\label{thm5}
Let $G$ be a graph of order~$n\geq 5$. If both $G$ and $\overline{G}$ are connected, then we have $tpc(G)+tpc(\overline{G})\leq n+2$, and the equality holds if and only if $G\cong T(2,n-2)$ or $\overline{G}\cong T(2,n-2)$.
\end{thm}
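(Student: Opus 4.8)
The plan is to reduce everything to the single inequality $tpc(H)\le \Delta(H)+1$, valid for every connected graph $H$, and then to pin the extremal behaviour to the tree $T(2,n-2)$ through a degree/structure dichotomy. The inequality is immediate from the results already at hand: any spanning tree $T$ of $H$ is a subgraph, so $\Delta(T)\le \Delta(H)$, and Proposition~\ref{pro2} together with Theorem~\ref{thm1} give $tpc(H)\le tpc(T)=\Delta(T)+1\le \Delta(H)+1$. Since $G$ and $\overline G$ are both connected, neither has a universal vertex, whence $\Delta(G),\Delta(\overline G)\le n-2$ and $tpc(G),tpc(\overline G)\le n-1$; this only recovers the crude bound $2(n-1)$, and the whole difficulty is to sharpen it to $n+2$.

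First I would dispose of the top value using the classification already proved. By Theorem~\ref{thm4} and $n\ge 5$, $tpc(G)=n-1$ holds exactly when $G\cong T(2,n-2)$; for this graph I would exhibit a Hamiltonian path of $\overline G$ directly (the $n-3$ leaves of the big star together with the leaf of the small star form a clique in $\overline G$, through which such a path is easily threaded), so that $\overline G$ is traceable and $tpc(\overline G)=3$ by Corollary~\ref{cor1}. This gives $tpc(G)+tpc(\overline G)=n+2$, so the bound is attained, and the same computation read in reverse shows that whenever the larger of the two numbers equals $n-1$ the sum is exactly $n+2$ and the graph is $T(2,n-2)$. The role of Lemma~\ref{lem2} is to support the analogous treatment of the value $n-2$: the graphs realizing it are, after discarding those with a universal vertex (whose complement is disconnected and hence irrelevant here, e.g. $H_1$), the trees with $\Delta=n-3$ together with finitely many small-order exceptions; for each such $G$ the complement is again dense enough to be traceable, so $tpc(\overline G)=3$ and the sum is $n+1<n+2$.

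With the top two values handled, the remaining task is to prove $tpc(G)+tpc(\overline G)\le n+1$ whenever neither $G$ nor $\overline G$ is $T(2,n-2)$, and here I would argue by the degree spread. If $\Delta(G)\le (n-1)/2$, then any two vertices non-adjacent in $\overline G$ (that is, adjacent in $G$) satisfy $d_{\overline G}(x)+d_{\overline G}(y)=2(n-1)-d_G(x)-d_G(y)\ge n-1$, so $\overline G$ is traceable by the classical Ore condition for a Hamiltonian path; then $tpc(\overline G)=3$ while $tpc(G)\le n-2$, giving a sum of at most $n+1$. The symmetric hypothesis $\delta(G)\ge (n-1)/2$ makes $G$ traceable and is handled identically. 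This settles every case except the balanced regime $\delta(G)<(n-1)/2<\Delta(G)$, which by complementation also forces $\delta(\overline G)<(n-1)/2<\Delta(\overline G)$.

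The balanced regime is where the main obstacle lies: both graphs then carry a vertex of degree exceeding $(n-1)/2$ and a vertex of degree below it, so neither the Ore argument nor the estimate $tpc\le\Delta+1$ alone suffices, and this is precisely the regime inhabited by $T(2,n-2)$ itself, so the analysis must simultaneously deliver the bound and isolate the unique extremal graph. My plan is to control it through the structural information behind Lemma~\ref{lem2}: a graph with large $tpc$ must have small maximum degree in \emph{no} spanning tree, i.e.\ a cut vertex whose deletion leaves many components (Proposition~\ref{pro3} is the clean instance for bridges), hence a near-tree shape whose complement is dense and therefore traceable; conversely, once neither complement is traceable, both graphs are forced to be balanced with $tpc$ bounded well below $n$, keeping the sum under control. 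Making this trade-off between ``many branches in $G$'' and ``traceability of $\overline G$'' quantitative enough to cover all orders $n$ is the crux; the base order $n=5$ is exactly Lemma~\ref{lem3}.
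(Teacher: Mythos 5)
Your treatment of the equality case is sound (the complement of $T(2,n-2)$ is indeed traceable, a mild variant of the paper's route through Lemma~\ref{lem1}), and the Ore/Dirac degree-spread argument correctly disposes of the regimes $\Delta(G)\le (n-1)/2$ and $\delta(G)\ge (n-1)/2$. But the proof has a genuine gap at exactly the point you yourself flag as ``the crux'': the balanced regime $\delta(G)<(n-1)/2<\Delta(G)$ is never actually handled, and that regime is where the entire difficulty of the theorem lives. It contains, for instance, every graph with both a pendant vertex and a vertex of degree close to $n-2$, and for such graphs the only bound your proposal supplies is $tpc\le\Delta+1$, which allows sums as large as $2n-2$. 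Your sketched resolution rests on two claims that are neither proved nor citable: first, a characterization of the graphs with $tpc=n-2$ as ``trees with $\Delta=n-3$ together with finitely many small-order exceptions'' --- the paper establishes no such characterization; Lemma~\ref{lem2} only computes $tpc$ for the four specific unicyclic graphs $H_1,\dots,H_4$, which is far from classifying all graphs attaining the value $n-2$; second, the assertion that once neither complement is traceable both $tpc$ values are ``bounded well below $n$,'' for which no tool is offered.

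The paper closes this regime with a connectivity dichotomy, not a degree dichotomy, and that machinery is not optional. If both $G$ and $\overline{G}$ are $2$-connected, Theorem~\ref{thm3} gives $tpc\le 4$ for each (with an ad hoc argument when $n=6$). Otherwise, say $G$ has a cut vertex $u$, with $G_1,\dots,G_k$ the components of $G-u$ and $n_k$ the largest component order: if some cut vertex satisfies $n-1-n_k\ge 2$, then $\overline{G}-u$ contains a spanning complete bipartite graph $K_{n-1-n_k,\,n_k}$ and Lemma~\ref{lem1} yields $tpc(\overline{G})=3$; if every cut vertex satisfies $n-1-n_k=1$ (i.e., leaves a pendant vertex), then either two distinct cut vertices produce a spanning $K_{2,n-4}$ plus two vertices in $\overline{G}$ (Lemma~\ref{lem1} again), or $G$ has a unique cut vertex $u$ with pendant edge $uv$, $G-v$ is $2$-connected so $tpc(G)\le 4$ by Theorem~\ref{thm3} and Corollary~\ref{cor2}, and one finds $H_2$, $H_3$ or $H_4$ of Lemma~\ref{lem2} as a spanning subgraph of $\overline{G}$ to get $tpc(\overline{G})\le\max\{4,n-3\}$, with a separate argument for $n=6$. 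Your degree-spread cases do not substitute for any of this: to repair the proposal you would have to supply an argument of comparable strength for every graph having simultaneously a low-degree and a high-degree vertex, and nothing in your outline does so.
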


\begin{proof}
It follows from Lemma~\ref{lem3} that the result holds for $n=5$. So we assume that $n\geq 6$.
If $G\cong T(2,n-2)$, then $\overline{G}$ contains a spanning subgraph $H$ that is obtained by attaching a pendant edge to the complete bipartite graph $K_{2,n-3}$. So we have $tpc(G)=3$ by Lemma~\ref{thm1}. Combining with Theorem~\ref{thm4}, the result is clear. Similarly, we get that $tpc(G)+tpc(\overline{G})=n+2$ if $\overline{G}\cong T(2,n-2)$. In the following, we prove that
$tpc(G)+tpc(\overline{G})< n+2$ when $G\ncong T(2,n-2)$ and $\overline{G}\ncong T(2,n-2)$. Under this assumption, we have $3\leq tpc(G)\leq n-2$ and $3\leq tpc(\overline{G})\leq n-2$ by Proposition~\ref{pro1} and Theorem~\ref{thm4}.

We first consider the case that both $G$ and $\overline{G}$ are~$2$-connected. When $n=6$, we claim that $tpc(G)=3$. Suppose that the circumference of $G$ is $k$. If $k=6$, then $tpc(G)\leq tpc(C_6)=3$. If $k=4$, then $G$ contains a spanning $K_{2,4}$, contradicting the fact that $\overline{G}$ is connected.
Next, we assume that $G$ contains a~$5$-cycle $C=v_1v_2v_3v_4v_5$. Then $G$ is traceable, so $tpc(G)=3$ by Corollary~\ref{cor1}. Thus, we have $tpc(G)+tpc(\overline{G})\leq 3+n-2<n+2$.
For $n\geq 7$, we have $tpc(G)\leq 4$ and $tpc(\overline{G})\leq 4$ by Theorem~\ref{thm3}.
Hence, we get $tpc(G)+tpc(\overline{G})\leq 4+4<n+2$.

Now, we consider the case that at least one of $G$ and $\overline{G}$ has cut vertices. Without loss of generality, we suppose that $G$ has cut vertices. Let $u$ be a cut vertex of $G$, let $G_1,G_2,\dots,G_k$ be the components of $G-u$, and let $n_i$ be the number of vertices in $G_i$ for $1\leq i\leq k$ with $n_1\leq \dots\leq n_k$. We consider the following two cases.

{\bf Case 1.} There exists a cut vertex $u$ of $G$ such that $n-1-n_k\geq 2$. Since $\Delta(G)\leq n-2$, we have $n_k\geq 2$. We know that $\overline{G}-u$ contains a spanning complete bipartite graph $K_{n-1-n_k,n_k}$. Hence, it follows from Lemma~\ref{lem1} that $tpc(\overline{G})=3$. Combining with the fact that $tpc(G)\leq n-2$, we get that $tpc(G)+tpc(\overline{G})< n+2$.

{\bf Case 2.} Every cut vertex $u$ of $G$ satisfies that $n-1-n_k=1$.

First, we suppose that $G$ has at least two cut vertices, say $u_1$ and $u_2$. Let $u_1v_1$ and $u_2v_2$ 
 be two pendant edges of $G$. Obviously, the edges $u_1v_1$ and $u_2v_2$ are disjoint. 
 So $u_1v_2,u_2v_1\in E(\overline{G})$, and $\overline{G}-\{u_1,u_2\}$ contains a spanning complete bipartite graph $K_{2,n-4}$ with two partitions 
 $U=\{v_1,v_2\}$ and $W=V(G)\backslash \{u_1,v_1,u_2,v_2\}$. By Lemma~\ref{lem1}, 
 we have that $tpc(\overline{G})=3$. 
 Together with the fact that $tpc(G)\leq n-2$, we get that $tpc(G)+tpc(\overline{G})< n+2$.

Now, we consider the subcase that $G$ has only one cut vertex $u$ and let $uv$ be the pendant edge of $G$. Then $G-v$ is~$2$-connected. 
By Theorem~\ref{thm3} and Corollary~\ref{cor2}, we have $tpc(G)\leq 4$, thus $tpc(G)+tpc(\overline{G})\leq n+2$. 
Now, we prove that the equality cannot hold. Note that $d_{\overline{G}}(v)=n-2$. Let $N_{\overline{G}}(v)=\{w_1,w_2,\dots,w_{n-2}\}$. 
Since $\Delta(G)\leq n-2$, there exists a vertex $w_i$ $(1\leq i\leq n-2)$ not adjacent to $u$ in $G$, say $uw_1\notin E(G)$. 
Then $uw_1\in E(\overline{G})$. If there is a vertex $w_j$ $(2\leq j\leq n-2)$
adjacent to $w_1$ in $\overline{G}$, then $\overline{G}$ contains $H_3$ in Fig.~$2$ as a spanning subgraph,
so $tpc(\overline{G})\leq max\{4,n-3\}$. If there is a vertex $w_j$ $(2\leq j\leq n-2)$
adjacent to $u$ in $\overline{G}$, then $\overline{G}$ contains $H_4$ in Fig.~$2$ as a spanning subgraph, so $tpc(\overline{G})\leq max\{4,n-3\}$. 
If there are two vertices $w_j,w_k(2\leq j\neq k\leq n-2)$ 
adjacent in $\overline{G}$, then $\overline{G}$ contains $H_2$ in Fig.~$2$ as a 
spanning subgraph, so $tpc(\overline{G})\leq n-3$. We conclude that $tpc(\overline{G})\leq max\{4,n-3\}$ if
$G-v$ is~$2$-connected. For $n\geq 7$, 
we get the result $tpc(G)+tpc(\overline{G})\leq n+1<n+2$. 
For $n=6$, since $G-v$ is a~$2$-connected graph on~$5$ vertices, 
$G-v$ contains a spanning~$5$-cycle or a spanning $K_{2,3}$, 
implying that $tpc(G)=3$ by Corollary~\ref{cor1} and 
Lemma~\ref{lem1}. Thus, we have $tpc(G)+tpc(\overline{G})\leq 3+4=7<8$.
\end{proof}

\section{Lower bound on $tpc(G)+tpc(\overline{G})$}

As we have noted that $tpc(G)=1$ if and only if $G$ is a complete graph. In this case, the graph $\overline{G}$ is not connected. So, if $G$ and $\overline{G}$ are both connected, then $tpc(G)\geq 3$. Similarly, we have $tpc(\overline{G})\geq 3$. Hence, we obtain that $tpc(G)+tpc(\overline{G})\geq 6$.

\begin{thm}\label{thm6}
Let $G$ be a graph of order $n\geq 4$. If both $G$ and $\overline{G}$ are connected, then we have $tpc(G)+tpc(\overline{G})\geq 6$, and the lower bound is sharp.
\end{thm}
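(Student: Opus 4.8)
The inequality itself requires almost nothing, so the plan is to dispatch it first and then spend the effort on sharpness. Neither $G$ nor $\overline{G}$ can be complete, since the complement of a complete graph on $n\geq 2$ vertices is edgeless and hence disconnected. Thus both are connected and noncomplete, and by Proposition~\ref{pro1}$(ii)$ each has total-proper connection number at least~$3$, giving $tpc(G)+tpc(\overline{G})\geq 6$. This is exactly the argument already recorded immediately before the statement, so no further work is needed for the bound.

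The real content is the sharpness claim, and here the plan is to exhibit, for every $n\geq 4$, a single graph attaining equality, namely the path $G=P_n=v_1v_2\cdots v_n$. On one side this is immediate: $P_n$ is a tree with $\Delta(P_n)=2$, so $tpc(P_n)=3$ by Theorem~\ref{thm1} (equivalently, $P_n$ is traceable and noncomplete, so Corollary~\ref{cor1} applies). It therefore suffices to prove that $\overline{P_n}$ is connected with $tpc(\overline{P_n})=3$, and the cleanest route is to show that $\overline{P_n}$ is traceable and noncomplete and then invoke Corollary~\ref{cor1} once more. Noncompleteness is clear because $P_n$ has an edge, so $\overline{P_n}\ne K_n$.

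Thus the one genuine step is to produce a Hamiltonian path in $\overline{P_n}$. I would use the adjacency rule $v_iv_j\in E(\overline{P_n})$ iff $|i-j|\geq 2$, and write down the explicit ordering obtained by listing the even-indexed vertices in increasing order followed by the odd-indexed vertices in increasing order, so that within each block consecutive indices differ by exactly~$2$. The only junction to verify is the edge joining the last even-indexed vertex to $v_1$, whose index difference is at least~$2$ for all $n\geq 4$; hence every consecutive pair is adjacent in $\overline{P_n}$ and the ordering is a spanning path. This makes $\overline{P_n}$ traceable, in particular connected, so $tpc(\overline{P_n})=3$ and $tpc(G)+tpc(\overline{G})=3+3=6$, proving sharpness for every $n\geq 4$.

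I expect the only mild obstacle to be the junction edge together with the parity split (even versus odd $n$ changes which vertex is the last even-indexed one) and the smallest case $n=4$, where $\overline{P_4}\cong P_4$; all of these are routine once the adjacency rule $|i-j|\geq 2$ is in hand, and the hypothesis $n\geq 4$ is precisely what makes the junction difference at least~$2$.
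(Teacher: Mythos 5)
Your proof is correct, and it follows the same two-step skeleton as the paper -- the lower bound is dispatched exactly as in the paper via Proposition~\ref{pro1}$(ii)$, and sharpness is obtained by exhibiting complementary connected graphs that are both traceable and noncomplete, then invoking Corollary~\ref{cor1} twice. Where you differ is the witness: the paper builds a somewhat elaborate graph on $\{v\}\cup U\cup W$ with $N(v)=U$, $U$ independent, $W$ a clique, and each $u_i$ joined to a cyclic interval of roughly $(n-3)/4$ consecutive vertices of $W$, and then simply asserts that this graph and its complement are ``obviously'' traceable. You instead take $G=P_n$ and prove the classical fact that $\overline{P_n}$ is traceable for $n\geq 4$ by writing down the explicit Hamiltonian path $v_2v_4\cdots$ followed by $v_1v_3\cdots$, with the junction edge checked from the adjacency rule $|i-j|\geq 2$. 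Your example is simpler and your verification is complete and checkable (including the parity of $n$ and the boundary case $n=4$, where $\overline{P_4}\cong P_4$), whereas the paper's construction carries an unproved traceability claim; the only thing the paper's more intricate example could be said to buy is a witness whose two sides are more ``balanced'' in structure, but for the purpose of this theorem nothing beyond traceability of both sides is needed, so your route is a genuine simplification.
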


\begin{proof}
We only need to prove that there are graphs $G$ and $\overline{G}$ on $n\geq 4$ vertices such that $tpc(G)=tpc(\overline{G})=3$.

Let $G$ be the graph with vertex set $\{v\}\cup U\cup W$, where $U=\{u_1,\dots,u_{\lfloor\frac{n-1}{2}\rfloor}\}$ and $W=\{w_1,\dots,w_{\lceil\frac{n-1}{2}\rceil}\}$, such that $N(v)=U$ and $U$ is an independent set and $G[W]$ is a clique, and for each vertex $u_i$, $u_i$ is adjacent to $w_i,w_{i+1},\dots,w_{i+\lfloor\frac{n-3}{4}\rfloor}$ where the subscripts are taken modulo $\lceil\frac{n-1}{2}\rceil$. Obviously, the graphs $G$ and $\overline{G}$ are both traceable. It follows from Corollary~\ref{cor1} that $tpc(G)=tpc(\overline{G})=3$.
\end{proof}


\end{document}